\newtheorem{teo}{Theorem}
\newtheorem{prop}[teo]{Proposition}
\newtheorem{cor}[teo]{Corollary}
\declaretheoremstyle[
  spaceabove=\topsep, spacebelow=\topsep,
  headfont=\bf,  %\normalfont\scshape
  notefont=\mdseries, notebraces={(}{)},
  bodyfont=\rmfamily, %\normalfont,
  postheadspace=1em,
  qed=$\Diamond$
]{drem}
\declaretheorem[style=drem, name=Remark, numberlike=teo]{rmk}
\newcommand{\eg}[0]{\emph{e.g.} }
\newcommand{\ie}[0]{\emph{i.e.} }
\newcommand{\ssl}[1]{\underline{#1}}
\newcommand{\eps}[0]{\varepsilon}
\newcommand{\rr}[0]{\ensuremath{\mathbb{R}}}
\newcommand{\zz}[0]{\ensuremath{\mathbb{Z}}}
\newcommand{\del}[0]{\ensuremath{\partial}}
\newcommand{\vide}[0]{\varnothing}
\newcommand{\comp}[0]{\mathsf{c}}
\newcommand{\IS}[0]{\mathrm{I}\mathrm{S}}
\newcommand{\FP}[0]{\mathrm{F}\!\mathrm{P}\!}
\newcommand{\BHD}[0]{\mathsf{B}\!\mathcal{H}\!\mathsf{D}}
\newcommand{\HD}[0]{\mathcal{H}\!\mathsf{D}}
\newcommand{\D}[0]{\mathsf{D}}
\newcommand{\LL}[0]{\mathsf{L}}
\newcommand{\Gm}[0]{\Gamma}
\begin{document}
\begin{center}
\Large Harmonic functions with finite $p$-energy\\ 
on lamplighter graphs are constant.
\end{center}

\vspace*{1cm}

\renewcommand{\thefootnote}{\fnsymbol{footnote}}

\centerline{\large Antoine Gournay\footnote{TU Dresden, Fachrichtung Mathematik, 01062 Dresden, Germany.}\footnote{Supported by the ERC-StG 277728 ``GeomAnGroup''.}} 
% \textsf{antoine.gournay@unine.ch} }}

\vspace*{1cm}

\centerline{\textsc{Abstract}}

\begin{center}
\parbox{10cm}{{ \small 
\hspace*{.1ex} The aim of this note is to show that lamplighter graphs where the space graph is infinite and at most two-ended and the lamp graph is at most two-ended do not admit harmonic functions with gradients in $\ell^p$ (\ie finite $p$-energy) for any $p\in [1,\infty[$ except constants (and, equivalently, that their reduced $\ell^p$ cohomology is trivial in degree one). This answers a question of Georgakopoulos \cite{Agelos} on functions with finite energy in lamplighter graphs. The proof relies on a theorem of Thomassen \cite{Tho} on spanning lines in squares of graphs. Using similar arguments, it is also shown that many direct products of graphs (including all direct products of Cayley graphs) do not admit non-constant harmonic function with gradient in $\ell^p$.
}}
\end{center}

%-------------------------------------------------------------------------
%-------------------------------------------------------------------------
\section{Introduction}\label{s-intro}
%-------------------------------------------------------------------------
%-------------------------------------------------------------------------

\setcounter{teo}{0}
\renewcommand{\theteo}{\arabic{teo}}
\renewcommand{\theques}{\arabic{teo}}
\renewcommand{\thecor}{\arabic{teo}}

Given two graphs $H=(X,E)$ (henceforth the ``space'' graph) and $L=(Y,F)$ (henceforth the ``lamp'' graph), the lamplighter graph $G:= L \wr H$ is the graph constructed as follows. Fix some root vertex $o \in Y$ and let $\big( \oplus_X Y \big)$ be the set of ``finitely supported'' functions from $X \to Y$ (\ie only finitely many elements of $X$ are not sent to $o \in Y$.). Its vertices are elements of $X \times \big( \oplus_X Y \big)$. Two vertices $(x,f)$ and $(x',f')$ are adjacent if
\begin{itemize}\setlength{\itemsep}{0ex} \renewcommand{\labelitemi}{ $\cdot$ }
\item either $x \sim x'$ in $H$ and $f=f'$,
\item or $x = x'$, $f(y) = f'(y)$ for all $y \neq x$ and $f(x) \sim f'(x)$ in $L$.
\end{itemize}
It is easy to see that $L \wr H$ is connected exactly when both $H$ and $L$ are. In fact, in this note, all graphs will be assumed to be connected (this is not important) and the graphs are locally finite.

The ends of a graph are the infinite components of a group which cannot be separated by a finite set. More precisely, an end $\xi$ is a function from finite sets to infinite connected components of their complement so that $\xi(F) \cap \xi(F') \neq \vide$ (for any $F$ and $F'$).

Given a graph $G$, a real-valued function $f$ on its vertices $V$ is said to be harmonic if it satisfies the mean value property 
\[
\forall v \in V, \; f(v) = \tfrac{1}{\deg(v)}\sum_{w \sim v} f(w). 
\]
where $v$ is the degree (or valency) of $v$. The gradient of $f$ is the function on the edges $(v,w)$ defined by $\nabla f(v,w) = f(w) - f(v)$. The square of the $\ell^2$-norm of the gradient is often referred to as the energy of the function.

The main result here is:
\begin{prop}\label{lecor}
Assume $H$ is infinite and has at most two ends, $L$ has at least one edge, $L$ has two ends or less and that both $L$ and $H$ are locally finite, then there are no non-constant harmonic functions with gradient in $\ell^p$ in $L \wr H$ for any $p\in [1,\infty[$.
\end{prop}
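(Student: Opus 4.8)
The plan is to prove the equivalent assertion that the reduced $\ell^p$-cohomology of $G:=L\wr H$ vanishes in degree one, \ie that for every $f\colon V(G)\to\rr$ with $\nabla f\in\ell^p$ the cochain $\nabla f$ lies in the $\ell^p$-closure of $\{\nabla u:u\text{ finitely supported}\}$. This suffices: by Green's formula a harmonic $f$ satisfies $\sum_e\nabla f(e)\,\nabla u(e)=0$ for every finitely supported $u$, so if $\nabla f$ is also an $\ell^p$-limit of such $\nabla u$, then pairing $\nabla f$ with itself (legitimately, since $\ell^p\subseteq\ell^2$ when $p\le 2$, and via the equivalence recalled in the introduction for the remaining range) forces $\nabla f=0$.

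First I would verify that $G$ meets the hypotheses of Thomassen's theorem. Local finiteness is assumed. For $2$-connectedness one uses that $L$ has an edge: given any vertex $w=(x_0,f_0)$, a path avoiding $w$ between two neighbours of $w$ is produced by first changing a lamp at some vertex of $H$ far from $x_0$ (possible since $H$ is infinite and connected), then performing the original moves, then undoing the lamp change. For the ends one checks $G$ has at most two ends — in fact exactly one: any finite set $K$ is escaped by ``complicating'' the configuration, since a configuration with large enough support (resp.\ with lamps far enough from the root) cannot lie in $K$, and any two such configurations are joined inside $G\setminus K$ by routing through configurations that stay complicated. Thomassen's theorem then yields a spanning line $R=(v_n)_{n\in\zz}$ of $G^2$, \ie a bijection $\zz\to V(G)$ with $d_G(v_n,v_{n+1})\le 2$ for all $n$.

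The main step is to exploit $R$. There is first a transfer: if $\nabla f\in\ell^p(E(G))$ and $g(n):=f(v_n)$, then $\nabla g\in\ell^p(\zz)$, because each edge of $R$ is a path of length $\le 2$ in $G$ and, as $R$ visits every vertex once, each edge of $G$ is traversed by only boundedly many edges of $R$. Then, for large $N$, one forms a finitely supported approximant $u_N=(f-c_N)\psi_N$, where $\psi_N$ is a cut-off equal to $1$ on $\{v_n:|n|\le N\}$ and $0$ far out, and $c_N$ is the average of $f$ over a slab $\{v_n:N\le|n|\le N'\}$ chosen — by a multi-scale pigeonhole applied to $\sum_n|\nabla g(n)|^p<\infty$ — so that $g$ has tiny oscillation on it. Since $\nabla u_N-\nabla f=(\psi_N-1)\nabla f+(f-c_N)\nabla\psi_N$, the first term tends to $0$ in $\ell^p$ (its support shrinks) and the second must be controlled using the oscillation bound on the slab together with the fact that, by local finiteness, only $O(N'-N)$ edges of $G$ meet the slab.

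The hard part is precisely this last estimate. The edges of $G$ are transverse to the linear order supplied by $R$, so one must also dominate the contribution of $G$-edges that join the inner region directly to the outer region; here I would use that $\nabla f\in\ell^p\subseteq\ell^\infty$ (each such edge carries bounded gradient), local finiteness (to count the offending edges), and the freedom to let $N'/N\to\infty$ (so the slab oscillation wins). Making these three balance, and in particular ruling out a persistent contribution from ``long'' edges crossing the slab, is where the one-ended, amenable geometry of $L\wr H$ has to enter essentially. Once this is done, $\nabla f$ is an $\ell^p$-limit of gradients of finitely supported functions, the reduced $\ell^p$-cohomology of $L\wr H$ vanishes in degree one, and the Proposition follows.
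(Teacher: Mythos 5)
Your strategy applies Thomassen's theorem to $G=L\wr H$ itself and then tries to push a cut-off/approximation argument along the resulting spanning line of $G^2$. The fatal point is the one you yourself flag as ``the hard part'': controlling the $G$-edges transverse to the linear order. This is not a technical estimate left to the reader --- it is the entire content of the theorem, and as stated your scheme cannot supply it. A spanning line of the square exists for many one-ended, $2$-connected, locally finite graphs whose first reduced $\ell^p$-cohomology is \emph{non}-zero (for instance Cayley graphs of cocompact Fuchsian groups for large $p$), so no argument that uses only ``$R$ is a spanning line of $G^2$'' plus a multi-scale pigeonhole on $\sum_n|\nabla g(n)|^p$ can conclude. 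Concretely, the enumeration $n\mapsto v_n$ need not be compatible with the geometry of $G$: the slab $\{v_n: N\le |n|\le N'\}$ need not separate the inner from the outer region in $G$, an edge of $G$ can join $v_m$ to $v_{m'}$ with $|m-m'|$ arbitrarily large, and small oscillation of $g=f\circ R$ along the slab gives no control on $f$ across such edges; the number of offending edges is also not $O(N'-N)$ without bounded valency. The amenable/Liouville geometry of the lamplighter, which you invoke in one sentence, is exactly what is missing, and nothing in the proposal shows how it would enter. (The opening reduction also quietly needs care: the duality argument pairing $\nabla f$ with approximants works for $p\le 2$, but for $p>2$ the equivalence between vanishing of reduced cohomology and absence of non-constant functions in $\HD^p$ requires bounded valency and isoperimetric hypotheses, which $G$ is not assumed to satisfy.)

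The paper's route is genuinely different and avoids this obstruction: Thomassen (or Seward) is applied to $L$ and $H$ \emph{separately} (each has at most two ends), producing spanning lines or cycles in a $k$-fuzz; after adding the at most boundedly many new edges per vertex (which multiplies the $\ell^p$-norm of the gradient by a bounded factor), $L\wr H$ contains a spanning copy of $G_0$, a lamplighter over $\zz$ with line or cycle lamps. That specific subgraph is Liouville by Kaimanovich and satisfies $\IS_d$ for all $d$, so by the transfer results of \cite{moi-poiss} the property ``every function of $\D^p$ takes a single value at infinity'' holds for $G_0$ and passes to the supergraph $L\wr H$; the maximum principle then rules out non-constant harmonic functions with $\ell^p$ gradient. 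In other words, the wreath-product structure is used to manufacture a spanning subgraph that already has trivial behaviour at infinity, rather than a spanning line, which by itself carries no such information (the line has non-trivial reduced $\ell^p$-cohomology). To repair your proof you would have to import this input --- Liouville property plus strong isoperimetry of a suitable spanning lamplighter --- at which point you are reproducing the paper's argument.
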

This result is in contrast with the fact that lamplighter graphs have bounded harmonic functions as soon as $H$ is not recurrent. Indeed, a bounded function has necessarily its gradient in $\ell^\infty$.

In fact, Proposition \ref{lecor} uses (and, when the graphs have bounded valency, is equivalent to) the vanishing of the reduced $\ell^p$ cohomology in degree one, see \cite{moi-poiss} for definitions. 
The proof of Proposition \ref{lecor} is essentially a particular case of \cite[Question 1.6]{moi-poiss}. This answers partially questions which may be found (in different guises) in 
Georgakopoulos \cite[Problem 3.1]{Agelos} and
Gromov \cite[\S{}8.$A_1$.($A_2$), p.226]{Gro}. 
Regarding \cite{Agelos}, Proposition \ref{lecor} seems hard to adapt to cases with infinitely many ends, but covers all $p$ (instead of $p=2$).

As for \cite{Gro}, the question there concerns other types of graphs; for lamplighter graphs of Cayley graphs the answer to this question is essentially complete. Indeed, a wreath product (\ie lamplighter \emph{group}) is amenable exactly when the lamp and space groups are amenable. Since amenable groups have at most $2$ ends, Proposition \ref{lecor} shows the reduced $\ell^p$-cohomology of any amenable wreath product is trivial. Note that Martin \& Valette \cite[Theorem.(iv)]{MV} show this is still true when $L$ is not amenable and has infinitely many ends (and $H$ is infinite). 

Proposition \ref{lecor} extends probably to graphs with finitely many ends. To do this one would need to answer the following question. Assume $\mathcal{G}$ is the set of graphs obtained by taking a cycle and attaching to it finitely many (half-infinite) rays. Is the lamplighter graph $L \wr H$ with $L,H \in \mathcal{G}$ Liouville? This seems to follow from classical consideration of Furstenberg (coupling), since both $H$ and $L$ are recurrent.

Our other application concerns direct product. Given two graph $H_1=(X_1,E_1)$ and $H_2=(X_2,E_2)$, the direct product $H_1 \times H_2$ is defined as follows. Its vertices are elements of $X_1 \times X_2$. Two vertices $(x_1,x_2)$ and $(x_1',x_2')$ are adjacent if either either $x_1 \sim x_1'$ or $x_2 \sim x_2'$ but not both.
\begin{prop}\label{prodi}
Assume $G$ is a direct product of graphs $H_1 \times H_2$, so that $H_1$ has $1$ or $2$ ends and $H_2$ is a Cayley graph with volume growth at least polynomial of degree $d$, then there are no non-constant harmonic functions with gradient in $\ell^p$ for all $p < \tfrac{d+1}{2}$.
\end{prop}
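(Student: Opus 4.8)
\textit{Plan of proof.} I would argue directly that any harmonic $h$ on $G=H_1\times H_2$ with $\nabla h\in\ell^p(E(G))$ is constant once $p<\tfrac{d+1}{2}$; the corresponding statement for reduced $\ell^p$‑cohomology in degree one is what \cite[Question 1.6]{moi-poiss} addresses, and it is its proof scheme that I would transcribe. The first step is the same as in Proposition \ref{lecor}: since $H_1$ is connected, locally finite and has at most two ends, Thomassen's theorem \cite{Tho} gives a spanning line in the square $H_1^2$, i.e. an enumeration $(a_i)_{i\in\zz}$ (or $i\in\nn$ if $H_1$ is one‑ended) of $V(H_1)$ with $d_{H_1}(a_i,a_{i+1})\le 2$ for all $i$. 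Fixing a shortest $H_1$‑path from $a_i$ to $a_{i+1}$ and letting $I_n$ be the union of $\{a_i:|i|\le n\}$ with all these paths for $|i|<n$, one gets a nested exhaustion of $V(H_1)$ by \emph{connected} finite sets with $|I_n|\asymp n$ and $\diam_{H_1}(I_n)\lesssim n$ — a ``one‑dimensional'' exhaustion, no matter how fat $H_1$ is (it may be of superpolynomial growth, even transient).

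Next I would build boxes of effective dimension $d+1$. Writing $H_2=\cay(\Gamma,S)$, the hypothesis gives $c>0$ with $|B_{H_2}(y,n)|\ge c\,n^d$ for all $y,n$. Set $\Omega_n^y:=I_n\times B_{H_2}(y,n)\subseteq V(G)$: each $\Omega_n^y$ is connected in $G$, has $\diam_G(\Omega_n^y)\lesssim n$ and $|\Omega_n^y|\gtrsim n^{d+1}$. Crucially, because $H_2$ is \emph{vertex‑transitive}, for each scale $n$ one can choose $Y_n\subseteq V(H_2)$ maximal among sets whose points are pairwise at distance $>2n$; then the $\Omega_n^y$, $y\in Y_n$, are pairwise disjoint, while the ``doubles'' $I_n\times B_{H_2}(y,2n)$ cover $V(G)$ once $I_n$ meets a prescribed finite set. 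Thus at every scale $G$ carries a packing by pairwise disjoint copies of a fixed $(d+1)$‑dimensional box which, enlarged by a factor $2$, still fill $G$ — exactly the combinatorial input the argument behind Proposition \ref{lecor} needs, now with effective dimension $D:=d+1$ (the ``$1$'' produced by linearising the at‑most‑two‑ended $H_1$, plus the ``$d$'' from the Cayley graph $H_2$).

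To conclude, on each such box $\Omega$ one joins two vertices by a path of length $\lesssim n$ inside $\Omega$ and applies Hölder's inequality to get the Poincaré‑type bound $\mathrm{osc}_{\Omega}(h)^p\lesssim n^{p-1}\sum_{e\subseteq\Omega}|\nabla h(e)|^p$. Feeding this into the scheme of \cite[Question 1.6]{moi-poiss}: if $h$ were non‑constant one isolates a fixed $\omega_0>0$ (an edge where $|\nabla h|\ge\omega_0$), propagates it across the scales $n,2n,4n,\dots$ through the disjoint packings $\{\Omega_n^y:y\in Y_n\}$, and finds that sustaining oscillation $\omega_0$ of a harmonic function across scales forces, near scale $n$, a $p$‑energy bounded below by a positive multiple of $\omega_0^p\,n^{(d+1)-2p}$; as $n\to\infty$ this is incompatible with $\nabla h\in\ell^p$ precisely when $(d+1)-2p>0$, i.e. $p<\tfrac{d+1}{2}$. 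Hence $h$ is constant. (The exponent $\tfrac{d+1}{2}$ is the usual ``$D/2$'' loss of these soft methods — the reduced $\ell^p$‑cohomology of $\zz^{d+1}$ is handled the same way — and is presumably not optimal.)

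The hard part will be exactly the step using Thomassen's theorem: one must make sure that, although the balls of $H_1$ may be enormous, the spanning line of $H_1^2$ genuinely yields boxes $\Omega_n^y$ whose volume and Poincaré constant grow like those of balls of radius $n$ in $\zz^{d+1}$ (diameter $\asymp n$, volume $\gtrsim n^{d+1}$), and that enough pairwise disjoint translates of them — available only because $H_2$ is a Cayley graph, hence vertex‑transitive — are present to register the oscillation of $h$ at each scale. Once this ``effective $(d+1)$‑dimensionality'' of $G$ along the linearised $H_1$‑direction is established, the rest is a direct transcription of the proof of Proposition \ref{lecor}; the routine points (the one‑ended case for $H_1$, the extra vertices adjoined to $I_n$, the case $p=1$, and the explicit constants) are dispatched exactly as there.
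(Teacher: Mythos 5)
Your set-up (Thomassen's spanning line in the square of $H_1$, then products $I_n\times B_{H_2}(y,n)$ of ``effective dimension'' $d+1$) reproduces the geometric half of the paper's argument, but the analytic core of your proof is missing, and what you write in its place does not work as stated. From a single edge with $|\nabla h|\ge\omega_0$ you only know that the \emph{one} box of each scale containing that edge has oscillation $\ge\omega_0$; a connected box of diameter $\lesssim n$ with oscillation $\omega_0$ yields, via your H\"older/Poincar\'e step, only $\sum_{e\subseteq\Omega}|\nabla h(e)|^p\gtrsim\omega_0^p n^{1-p}$, which tends to $0$ for $p>1$. To reach your claimed lower bound $\omega_0^p n^{(d+1)-2p}$ you would need on the order of $n^{d-p}$ \emph{pairwise disjoint} boxes at scale $n$, each registering oscillation comparable to $\omega_0$, and nothing in the proposal produces them: the packing $\{\Omega_n^y\}_{y\in Y_n}$ exists, but there is no argument that the oscillation of $h$ spreads to many of its members. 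That such an argument cannot be purely combinatorial is shown by a regular tree: it too admits, at every scale, packings by connected sets of diameter $n$ and volume $\ge n^{d+1}$ for every $d$, yet it carries many non-constant functions in $\HD^p(G)$ for $p>1$. So harmonicity and the specific product/Cayley structure must enter in an essential way, not merely through the maximum principle, and this is exactly the step you leave as a black box. Moreover the ``scheme'' you propose to transcribe, \cite[Question 1.6]{moi-poiss}, is a \emph{question}, not a theorem; the usable statement is \cite[Theorem 1.2]{moi-poiss} (Theorem \ref{monteo} here), and to invoke it one must first exhibit a spanning subgraph on which $(1_p)$ or $(4_q)$ is already known.

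This is where the paper's proof differs from yours and from a literal ``transcription'' of Proposition \ref{lecor}: in the lamplighter case the missing input is Kaimanovich's Liouville property of $L_0\wr\zz$, but $G_0=\zz\times H_2$ need \emph{not} be Liouville when $H_2$ is non-amenable (e.g.\ a free group), so that route is unavailable here. Instead the paper uses the vanishing of $\ssl{\ell^pH}^1$ for Cayley graphs of groups with infinitely many finite conjugacy classes (in particular infinite centre), applied to $\Gm_0=\zz\times\Gm$ (Kappos \cite{Kap}, Martin--Valette \cite{MV}, Puls \cite{Puls-Can}, Tessera \cite{Tes}); the growth hypothesis gives $\IS_{d+1}$ for $G_0$ by \cite[(4.16) Corollary]{Woe}, Theorem \ref{monteo} then yields $(2_p)$ for $p<\tfrac{d+1}{2}$ on $G_0$, and the Thomassen/fuzz/edge-addition transfer together with the maximum principle concludes exactly as in \S\ref{s-proof}. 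Your proposal supplies no substitute for this cohomological ingredient (which is also the only place where ``Cayley graph'', rather than mere vertex-transitivity, is used), so the gap sits precisely at the point where the proof needs its second main input.
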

$H_1$ is only locally finite, but $H_2$ will be of bounded valency.
This generalises a result of Martin \& Valette \cite[Theorem.(v)]{MV} (on product of groups and which requires that one group in the direct product be non-amenable):
\begin{cor}\label{corprod}
Let $\Gm$ be a direct product of infinite [finitely generated] groups. Then there are no non-constant harmonic functions with gradient in $\ell^p$ in any Cayley graph of $\Gm$ (and the reduced $\ell^p$ cohomology in degree $1$ is trivial for all $p \in [1,\infty[$).
\end{cor}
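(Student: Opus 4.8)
The plan is to derive Corollary~\ref{corprod} from Proposition~\ref{prodi} by checking that a direct product of two infinite finitely generated groups always falls into the hypotheses of that proposition, for \emph{every} exponent $p\in[1,\infty[$. First I would fix a Cayley graph $G$ of $\Gamma = \Gamma_1\times\Gamma_2$ with $\Gamma_1,\Gamma_2$ infinite and finitely generated. The first reduction is a standard quasi-isometry/change-of-generating-set argument: the (non-)existence of non-constant harmonic functions with $\ell^p$-gradient, and the triviality of reduced $\ell^p$-cohomology in degree one, depend only on the quasi-isometry class of $G$, so it suffices to treat one convenient Cayley graph, and one may take the generating set to be $S_1\sqcup S_2$ where $S_i$ generates $\Gamma_i$; with this choice $G$ is exactly the direct product graph $H_1\times H_2$ of the Cayley graphs $H_i=\mathrm{Cay}(\Gamma_i,S_i)$ in the sense defined just above Proposition~\ref{prodi} (edges change exactly one coordinate). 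This identifies the Cayley graph with a direct product of two Cayley graphs, which is the setting of Proposition~\ref{prodi}.

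Next I would arrange the two factors so that the numerical hypothesis of Proposition~\ref{prodi} is met for an arbitrarily large $d$. The point is that an infinite finitely generated group has volume growth that is at least linear (degree $d=1$). More importantly, to get \emph{all} $p<\infty$ one plays the two factors against each other: write $\Gamma = \Gamma_1 \times (\Gamma_2 \times \cdots)$ is not available, but one can instead use that $\Gamma = (\Gamma_1) \times (\Gamma_2)$ and iterate, or more simply observe that for the bound $p<\tfrac{d+1}{2}$ to cover a given $p$ we only need the Cayley-graph factor $H_2$ to have polynomial growth of degree $d>2p-1$. If $\Gamma_2$ has superpolynomial (e.g. exponential) growth this is automatic for every $d$. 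If $\Gamma_2$ has polynomial growth of some finite degree $d_0$ (by Gromov's theorem it is then virtually nilpotent), one instead regroups: since $\Gamma_1$ is infinite, $\Gamma_1\times\Gamma_2$ contains $\mathbb{Z}$ in many ways, and one can split $\Gamma = \Gamma_1' \times \Gamma_2'$ differently, or simply pass to a finite-index subgroup $\mathbb{Z}^k\le$ (appropriate factor) of arbitrarily large rank $k$ living inside a factor of polynomial growth — giving growth degree $\ge k$. Concretely: either $\Gamma_2$ has superpolynomial growth and we are done for all $p$ with $H_2 = \mathrm{Cay}(\Gamma_2)$; or $\Gamma$ contains $\mathbb{Z}^k$ as a finite index subgroup of a direct factor for every $k$, hence (again up to quasi-isometry and finite index) we realise $G$ as a direct product with an $H_2$ of polynomial growth degree $k \to \infty$. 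Taking $k$ large enough that $k>2p-1$ and applying Proposition~\ref{prodi} kills non-constant harmonic functions with $\ell^p$-gradient; letting $p\to\infty$ through this family covers $[1,\infty[$.

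Finally, for the parenthetical cohomological statement I would invoke the remark made after Proposition~\ref{lecor}: for graphs of bounded valency — which Cayley graphs of finitely generated groups are — the vanishing of non-constant harmonic functions with $\ell^p$-gradient is equivalent to the triviality of the reduced $\ell^p$-cohomology in degree one (see \cite{moi-poiss}); so the first assertion of the corollary immediately gives the second for every $p\in[1,\infty[$.

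\medskip

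\noindent\emph{Main obstacle.} The delicate point is the passage to \emph{all} $p$: Proposition~\ref{prodi} only gives a range $p<\tfrac{d+1}{2}$ tied to the polynomial growth degree $d$ of \emph{one} Cayley-graph factor, so the proof must produce, for each finite $p$, a way of writing (a quasi-isometric model of) the Cayley graph of $\Gamma$ as a direct product $H_1\times H_2$ with $\mathrm{gr}(H_2)$ of polynomial degree $>2p-1$ (or superpolynomial). Justifying cleanly that finite-index subgroups and reshuffling of factors do not affect the conclusion — i.e. that these operations preserve the relevant analytic invariant — and handling uniformly the virtually-nilpotent versus superpolynomial dichotomy for the factors, is where the real care is needed; everything else is bookkeeping about generating sets and the definition of the direct product graph.
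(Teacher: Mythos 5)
Your first case (one factor not virtually nilpotent, i.e.\ of superpolynomial growth) is essentially the paper's: choose the product generating set, take $H_2$ to be the Cayley graph of that factor, note it has $\IS_d$ (growth at least $n^d$) for all $d$, apply Proposition~\ref{prodi} for every $p$, and use Theorem~\ref{monteo} plus quasi-isometry invariance of reduced $\ell^p$-cohomology to make the conclusion independent of the generating set. But your treatment of the remaining case contains a genuine gap. When both factors are virtually nilpotent, your ``regrouping'' claim --- that $\Gm$ contains $\mathbb{Z}^k$ of arbitrarily large rank $k$ inside a direct factor (up to finite index) --- is simply false: a virtually nilpotent group has a fixed polynomial growth degree $d_0$, which bounds the rank of any free abelian subgroup, and $\Gm=\Gm_1\times\Gm_2$ is again virtually nilpotent of some fixed degree. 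For instance $\mathbb{Z}\times\mathbb{Z}$ contains no $\mathbb{Z}^3$ at all. Consequently no choice of splitting ever makes the growth degree of $H_2$ exceed $d_0$, and Proposition~\ref{prodi} only yields the range $p<\tfrac{d_0+1}{2}$, not all $p\in[1,\infty[$. The paper closes this case by a completely different argument: if both groups are virtually nilpotent, so is $\Gm$, and virtually nilpotent groups admit no non-constant harmonic functions with gradient in $c_0$ (hence none with gradient in $\ell^p$) --- citing \cite[Lemma 5]{moi-lpharm} and, for the stronger statement about sublinear growth, Hebisch \& Saloff-Coste \cite[Theorem 6.1]{HSC} --- and one notes $(1_p)$ still holds for all $p\in\,]1,\infty[$. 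You need such an input; no amount of reshuffling factors will get you there.

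A secondary point: Proposition~\ref{prodi} as stated requires $H_1$ to have at most two ends, while your $H_1=\mathrm{Cay}(\Gm_1)$ may have infinitely many ends (e.g.\ a free factor). This is harmless in the paper only because the realisation of $G_0$ as a spanning subgraph in the proof of Proposition~\ref{prodi} repeats the argument of Proposition~\ref{lecor}, and for Cayley graphs of arbitrary infinite finitely generated groups Seward \cite[Theorem 1.6]{Sew} supplies the spanning line without any ends hypothesis; if you use Proposition~\ref{prodi} purely as a black box, as you do, this hypothesis should be addressed. Your final reduction of the cohomological statement to the harmonic one via Theorem~\ref{monteo} is fine in spirit, but note the equivalence there also passes through the isoperimetric hypotheses, which is exactly why the paper phrases case 1 as ``$(3_p)$ for all $p$'' being equivalent to ``$(1_p)$ for all $p$'' under $\IS_d$ for all $d$.
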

Proposition \ref{lecor} and Corollary \ref{corprod} also have consequences on the cohomology of Hilbertian representations with $\ell^p$-coefficients, see \cite[Corollary 2.6]{GJ}. The same can be said for some representations given by $G \curvearrowright \mathsf{L}^q$ (with coefficients in $\ell^p$) modulo the following remark:
\begin{rmk}
There is a non-linear analogue of harmonic equations called $p$-harmonic equation (with $p \in ]1,\infty[$). The proofs of the Propositions \ref{lecor} and \ref{prodi} also apply to $q$-harmonic functions with gradient in $\ell^p$. Indeed, $q$ is irrelevant, since only the fact that harmonic functions satisfy the maximum principle is required to conclude (and $q$-harmonic functions also satisfy the maximum principle). 
\end{rmk}

{\it Acknowledgments:} The authors wishes to thank A.~Georgakopoulos for mentioning the existence of the work of Thomassen \cite{Tho}, thus allowing to apply the current results outside the class of groups.

%-------------------------------------------------------------------------
\section{Preliminaries}
%-------------------------------------------------------------------------

Let $\D^p(G)$ be the space of functions on the vertices of the graph $G$ with gradient in $\ell^p$ and $\HD^p(G)$ be the subset of $\D^p(G)$ consisting of functions which are furthermore harmonic. Lastly, $\BHD^p(G)$ are the bounded functions in $\HD^p(G)$.
The notation $\HD^p(G) \simeq \rr$  means that the only functions in $\HD^p(G)$ are constants.

For $F \subset X$ a subset of the vertices, let $\del F$ be the edges between $F$ and $F^\comp$. Let $d \in \rr_{\geq 1}$. Then, a graph $G=(X,E)$ has 
\[
\IS_d \text{ if there is a }  \kappa >0 \text{ such that for all finite } F \subset X, \; |F|^{(d-1)/d} \leq \kappa |\del F|.
\]
Quasi-homogeneous graphs with a certain (uniformly bounded below) volume growth in $n^d$ will satisfy these isoperimetric profiles, see Woess' book \cite[(4.18) Theorem]{Woe}. For example, the Cayley graph of a group $G$ satisfies $\IS_d$ \emph{for all} $d$ if and only if $G$ is not virtually nilpotent.

An important ingredient of the proofs is a result from \cite{moi-poiss}. Let $B_n$ be a sequence of balls in the graph with the same centre and $B_n^\comp$ its complement. On a connected graph, a function $f:X \to \rr$ takes only one value at infinity if $\exists c \in \rr$ so that $\forall \eps>0, \exists n_\eps$ satisfying $f(B_{n_\eps}^\comp) \subset [c-\eps,c+\eps]$. Define for $p \geq 1$:
\begin{enumerate}\renewcommand{\labelenumi}{$(\arabic{enumi}_p)$} \setlength{\itemsep}{0em}
 \item The reduced $\ell^p$-cohomology in degree one vanishes (for short, $\ssl{\ell^qH}^1 = \{0\}$).
 \item All functions in $\D^p(G)$ take only one value at infinity.
 \item There are no non-constant functions in $\HD^p(G)$.
 \item There are no non-constant functions in $\BHD^p(G)$.
\end{enumerate}
For the record, note that $(1_1) \iff (2_1) \iff$the number of ends is $>1$ (see \cite[Proposition A.2]{moi-poiss}). 
Let us sum up \cite[Theorem 1.2]{moi-poiss} here again:
\begin{teo}\label{monteo}
Assume a graph $G$ is of bounded valency and has $\IS_d$. For $1< p < d/2$, $(1_p) \iff (2_p) \implies (3_p) \implies (4_p)$ and, for $q \geq \frac{dp}{d-2p}$, $(4_q) \implies (1_p)$.

If $G$ has $\IS_d$ for all $d$, then ``$\forall p \in ]1,\infty[, (i_p)$ holds'' where $i \in \{1,2,3,4\}$ are four equivalent conditions.
\end{teo}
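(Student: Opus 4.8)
The plan is to prove the four implications separately, concentrating the real work on the last one. \emph{Step $(3_p)\Rightarrow(4_p)$} is immediate, since $\BHD^p(G)\subset\HD^p(G)$. \emph{Step $(2_p)\Rightarrow(3_p)$} is the maximum principle: a function $h\in\HD^p(G)\subset\D^p(G)$ takes, by $(2_p)$, a single value $c$ at infinity, so $h$ is bounded and $h-c$ attains its supremum at a vertex; the strong maximum principle for harmonic functions then forces $h\equiv c$. \emph{Step $(1_p)\iff(2_p)$}: I would identify the reduced $\ell^p$-cohomology in degree one with $\D^p(G)/\overline{\ell^p(V)+\rr}$, the closure being for the seminorm $f\mapsto\|\nabla f\|_p$ (bounded valency gives $\ell^p(V)\subset\D^p(G)$). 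If $u$ takes the value $c$ at infinity, then truncating $u-c$ outside $B_n$ produces finitely supported functions whose gradients converge to $\nabla u$ in $\ell^p$, since the $\ell^p$-mass of $\nabla u$ outside $B_n$ tends to $0$; hence $u\in\overline{\ell^p+\rr}$, giving $(2_p)\Rightarrow(1_p)$. Conversely $\IS_d$ with $p<d$ yields $\|f\|_{p^*}\le\kappa'\|\nabla f\|_p$ on finitely supported $f$, so every element of $\overline{\ell^p(V)}$ lies in $\ell^{p^*}(V)$ and thus tends to $0$ at infinity; hence every class in $\overline{\ell^p+\rr}$ takes one value at infinity, giving $(1_p)\Rightarrow(2_p)$.

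The crux is \emph{Step $(4_q)\Rightarrow(1_p)$}. Since $p<d/2$ forces $d>2$, the graph is transient and the Green operator $\Delta^{-1}$ exists with kernel comparable to $\mathrm{dist}(x,y)^{2-d}$ (from the on-diagonal heat decay $n^{-d/2}$ furnished by $\IS_d$). I would first establish discrete Hardy--Littlewood--Sobolev estimates $\Delta^{-1}\colon\ell^p\to\ell^q$ with $\tfrac1q=\tfrac1p-\tfrac2d$, so that $q=\tfrac{dp}{d-2p}$ is exactly the iterated Sobolev conjugate, together with $\nabla\Delta^{-1}\colon\ell^p\to\ell^{p^*}$ where $\tfrac1{p^*}=\tfrac1p-\tfrac1d$. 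Given $u\in\D^p(G)$, the density $\rho:=\Delta u$ lies in $\ell^p(V)$ (bounded valency), so $g:=\Delta^{-1}\rho\in\ell^q(V)$ and $h:=u-g$ is harmonic; moreover $\nabla h=\nabla u-\nabla g$ with $\nabla u\in\ell^p\subset\ell^q$ and $\nabla g\in\ell^{p^*}\subset\ell^q$, as $p^*<q$. Thus $h\in\HD^q(G)$, while $g\in\ell^q(V)$ vanishes at infinity; consequently $u$ takes one value at infinity if and only if $h$ does, and by Step $(1_p)\iff(2_p)$ it remains only to prove that every such $h$ is constant.

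The whole matter therefore reduces to the self-improvement $(4_q)\Rightarrow(3_q)$, which I regard as the main obstacle. Here I would invoke the $q$-Royden decomposition $\D^q(G)=\D^q_0(G)\oplus\HD^q(G)$, valid under the transience guaranteed by $\IS_d$ with $d>2$, where $\D^q_0(G)$ is the $\|\nabla\cdot\|_q$-closure of finitely supported functions. For $h\in\HD^q(G)$ truncate to $h_M:=\max(\min(h,M),-M)\in\D^q(G)\cap\ell^\infty$, noting $\|\nabla h_M\|_q\le\|\nabla h\|_q$, and let $\tilde h_M\in\HD^q(G)$ be its harmonic component. The maximum principle on the $q$-Royden boundary gives $\|\tilde h_M\|_\infty\le M$, so $\tilde h_M\in\BHD^q(G)$ and $(4_q)$ forces $\tilde h_M\equiv c_M$; hence $\nabla h_M=\nabla(h_M-\tilde h_M)\in\nabla\D^q_0(G)$. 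Letting $M\to\infty$, dominated convergence gives $\nabla h_M\to\nabla h$ in $\ell^q$; since $\D^q_0(G)$ is closed for the seminorm, $\nabla h\in\nabla\D^q_0(G)$, so directness of the Royden sum places $h$ in $\D^q_0(G)\cap\HD^q(G)$, which contains only constants. This yields $(3_q)$ and completes $(4_q)\Rightarrow(1_p)$. Finally, when $\IS_d$ holds for all $d$, one fixes $p\in{]1,\infty[}$, chooses $d>2p$, sets $q=\tfrac{dp}{d-2p}$, and applies $(4_q)\Rightarrow(1_p)$ to the hypothesis $(4_q)$, closing the four conditions into a single equivalence for every $p$. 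I expect the delicate points to be the transfer of the continuous Riesz-potential and Sobolev estimates to the combinatorial Green kernel, and the construction of the $q$-Royden compactification together with its maximum principle for general $q\in{]1,\infty[}$.
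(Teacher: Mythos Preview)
The paper does not prove this statement: Theorem~\ref{monteo} is quoted from \cite[Theorem~1.2]{moi-poiss} and used as a black box. There is therefore no in-paper argument to compare your proposal against.

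Your sketch is nonetheless a plausible route, and the easy chain $(1_p)\iff(2_p)\Rightarrow(3_p)\Rightarrow(4_p)$ is handled correctly. For $(4_q)\Rightarrow(1_p)$, decomposing $u=\Delta^{-1}\Delta u+h$ via the Green operator, with the $\ell^p\to\ell^q$ bound coming from the $n^{-d/2}$ heat-kernel decay furnished by $\IS_d$, is close in spirit to the cited work. Your reduction to $(4_q)\Rightarrow(3_q)$, however, leans on the $q$-Royden decomposition $\D^q=\D^q_0\oplus\HD^q$ and the associated boundary maximum principle; these require $q$-hyperbolicity ($1\notin\D^q_0$), which $\IS_d$ delivers via the Sobolev inequality only for $q<d$, whereas $q=\tfrac{dp}{d-2p}\ge d$ as soon as $p\ge d/3$. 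Under the bare hypothesis $\IS_d$ the graph may well be $q$-parabolic (think of $\zz^d$ with $q\ge d$), and then neither the Royden splitting nor the claim that $\D^q_0\cap\HD^q$ consists only of constants is available for \emph{linearly} harmonic $h$ when $q>2$ (the pairing argument that works for $q$-harmonic functions, or for $q\le 2$, does not go through). The source \cite{moi-poiss}, as its title indicates, proceeds instead through boundary values of the random walk on the Poisson boundary, which sidesteps this range issue; you should either follow that route or supply a separate argument covering the $q$-parabolic regime.
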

The important corollary of the above theorem (see \cite[Corollary 4.2.1]{moi-poiss}) is that if a graph $G$ has a spanning subgraph which is Liouville and has $\IS_d$ for some $d$ (resp. for all $d$), then $(1_p)$, $(2_p)$ and $(3_p)$ hold for any $p< d/2$ (resp. for all $p < \infty$). Indeed, Liouville implies that $(4_q)$ holds for all $q$, and the condition $(2_p)$ passes from a spanning subgraphs to the whole graph.

%-------------------------------------------------------------------------
\section{Proof of Proposition \ref{lecor}}\label{s-proof}
%-------------------------------------------------------------------------

The main second ingredient of the proof of Proposition \ref{lecor} is the following. Let $G_0=L \wr H$ the lamplighter graph where $L$ is either finite or a Cayley graph of $\zz$ and $H$ is a Cayley graph of $\zz$. For our current purpose it will suffice to note that $G_0$ has $\IS_d$ for any $d\geq 1$, see Woess' book \cite[(4.16) Corollary]{Woe}. 
A second important ingredient is that, using Kaimanovich \cite[Theorem 3.3]{Kaimano}, $G_0$ is Liouville, \ie a bounded harmonic function is constant.

The proof will be split in three steps for convenience.

{\bf Step 1 -} Assume that $H$ and $L$ have bounded valency. 
Note that if a spanning subgraph of $G$ has $\IS_d$, it implies that $G$ has $\IS_d$. Summing up, if a graph $G$ admits $G_0$ as a subgraph then 
$(1_q)$ holds in $G$ for any $q<\infty$ and, equivalently, $(3_p)$ holds in $G$ for any $p<\infty$.

It is also possible to work only up to quasi-isometry: if two graphs of bounded valency $\Gamma$ and $\Gamma'$ are quasi-isometric, then they have the same $\ell^p$-cohomology (in all degrees, reduced or not), see  \'Elek \cite[\S{}3]{El-qi} or Pansu \cite{Pan-qi}.

Recall that the $k$-fuzz of a graph $G$, is the graph $G^{[k]}$ with the same vertices as $G$ but now two vertices are neighbours in $G^{[k]}$ if their distance in $G$ is $\leq k$. $G^{[2]}$ is often called the square of $G$.

Lastly, using either Thomassen \cite{Tho} or Seward \cite[Theorem 1.6]{Sew}, the graphs $L$ and $H$ in Proposition \ref{lecor} are bi-Lipschitz equivalent to graphs containing a spanning line (or cycle if the graph is finite). In fact, this bi-Lipschitz equivalence is given by taking the $k$-fuzz of these graphs. An interested reader could probably show that $k=4$ is sufficient. This means that $L \wr H$ is bi-Lipschitz equivalent (and so quasi-isometric) to a graph containing $G_0$. This finishes the proof of Proposition \ref{lecor} when $H$ and $L$ both have bounded valency.

{\bf Step 2 -} Assume from now on that both $H$ and $L$ have connected spanning subgraphs of bounded valency, say $H'$ and $L'$ respectively. If there is a non-constant $f \in \HD^p(G)$ (where $G = L \wr H$). Then $f$ is not constant at infinity.
Indeed, since $f$ is harmonic, the maximum principle would then imply that $f$ is constant. 

But $f$ is also a function on the vertices of $G' = L' \wr H'$ and it is also in $\D^p(G')$ (because deleting edges only reduces the $\ell^p$ norm of the gradient). So $(2_p)$ cannot hold on $G'$. On the other hand $G'$ contains $G_0$ up to quasi-isometry and hence $\ssl{\ell^pH}^1(G') = \{0\}$. However, by Theorem \ref{monteo} above,
``$(1_p)$ for all $p$'' implies ``$(2_p)$ for all $p$''.

{\bf Step 3 -} Now assume $H$ and $L$ are only locally finite. The result of Thomassen \cite{Tho} still implies that (for some $k$) the $k$-fuzz of $H$ and $L$ have a spanning line (or cycle if the graph $L$ is finite). However, given a function $f \in \D^p(G)$, it may no longer be in $\D^p(G^{[k]})$ if $k>1$ and $G$ does not have bounded valency. To circumvent this problem, construct a graph $H'$ by adding (when necessary) the edges of the spanning line in $H^{[k]}$. Construct $L'$ similarly.

Given $f \in \D^p(G)$ where $G = L \wr H$, one has that $f \in \D^p(G')$ with $G' = L' \wr H'$. Indeed, in passing from $G$ to $G'$ at most four edges are added to each vertex and the gradient along these edge is expressed as a sum of $k$ values of the gradient of $f$ on $G$. The triangle inequality ensures that the $\ell^p$-norm of $\nabla f$ (on $G'$) is at most $(4k+1)$ times the $\ell^p$-norm of the gradient of $f$ on $G$.

This last reduction yields the conclusion. Indeed, if there is an $f \in \HD^p(G)$ which is not constant, then there is an $f \in \D^p(G')$ which takes different values at infinity. This is however excluded by step 2.

%-------------------------------------------------------------------------
\section{Proof of Proposition \ref{prodi} and Corollary \ref{corprod}}\label{s-prod}
%-------------------------------------------------------------------------

The main second ingredient for the proof of Proposition \ref{prodi} is that if $G$ is a Cayley graph of a [finitely generated] group and this group has infinitely many finite conjugacy class (\eg infinite center) then $\ssl{\ell^pH}^1(G) = \{0\}$ (there are many possible proofs: see Kappos \cite[Theorem 6.4]{Kap}, Martin \& Valette \cite[Theorem 4.3]{MV} Puls \cite[Theorem 5.3]{Puls-Can}, Tessera \cite[Proposition 3]{Tes} or \cite[Theorem 3.2]{moi-trans}).

\begin{proof}[Proof of Proposition \ref{prodi}]
Let $\Gm$ be the group whose Cayley graph is $H_2$, let $\Gm_0 = \zz \times \Gm$ and let $G_0$ be the direct product of the bi-infinite line and $H_2$ (a Cayley graph of $\Gm_0$). By the result quoted in the previous paragraph, $\ssl{\ell^pH}^1(G_0) = \{0\}$. The growth condition (see Woess' book \cite[(4.16) Corollary]{Woe}), implies that $G_0$ has $\IS_{d+1}$. By Theorem \ref{monteo}, one deduces that $G$ has no non-constant harmonic functions with gradient in $\ell^p$ for $p < \frac{d+1}{2}$.

To realise $G_0$ as a spanning subgraph, the arguments are absolutely identical to those of the proof of Proposition \ref{lecor} (\S{}\ref{s-proof} above).
\end{proof}

\begin{proof}[Proof of Corollary \ref{corprod}]
The proof 
 requires to distinguish two cases:
\begin{itemize}\setlength{\itemsep}{0em}  \renewcommand{\labelitemi}{ $\cdot$ }
 \item if one of the two groups (say $\Gm_2$) is not virtually nilpotent, then its Cayley graphs have $\IS_d$ for all $d$. By Theorem \ref{monteo}, ``$(3_p)$ for all $p$'' is equivalent to ``$(1_p)$ for all $p$'' (which does not depend on the generating set). Take a generating set so the graph is a direct product and take $H_2$ to be a Cayley graph of $\Gm_2$. Apply Proposition \ref{prodi} to conclude.
 \item if both groups are virtually nilpotent, so is the direct product. Then it is well-known that there are no non-constant harmonic functions with gradient in $c_0$ (see for example \cite[Lemma 5]{moi-lpharm}) and even no non-constant functions with sublinear growth (see Hebisch \& Saloff-Coste \cite[Theorem 6.1]{HSC}).
 Note that in this second case, one still has that, $(1_p)$ holds $\forall p \in ]1,\infty[$ . \qedhere
\end{itemize}
\end{proof}

\end{document}